\newtheorem{theorem}{Theorem}[section]
\newtheorem{corollary}{Corollary}[section]
\newtheorem{lemma}{Lemma}[section]
\newtheorem{remark}{Remark}[section]
\begin{document}
\setcounter{page}{1} 
\vspace{10mm}

\begin{center}
{\LARGE \bf  Analysis of the Ratio $D(n)/n$}
\vspace{8mm}

{\large \bf Jose Arnaldo B. Dris}
\vspace{3mm}

Department of Mathematics and Physics, Far Eastern University \\ 
Nicanor Reyes Street, Sampaloc, Manila, Philippines \\
e-mails: \url{josearnaldobdris@gmail.com}, \url{jadris@feu.edu.ph}
\vspace{2mm}

\end{center}
\vspace{10mm}

\noindent
{\bf Abstract:} In this note, we investigate properties of the ratio $D(n)/n$, which we will call the deficiency index.  We will discuss some concepts recast in the language of the deficiency index, based on similar considerations in terms of the abundancy index. \\
{\bf Keywords:} Abundancy index, deficiency index. \\
{\bf AMS Classification:} 11A25.
\vspace{10mm}

\section{Introduction}
If $n$ is a positive integer, then we write $\sigma(n)$ for the sum of the divisors of $n$.  A number $n$ is \emph{perfect} if $\sigma(n)=2n$.  We call $M$ \emph{almost perfect} if $\sigma(M)=2M-1$.  We say $k$ is \emph{deficient} if $\sigma(k)<2k$, and we call $m$ \emph{abundant} if $\sigma(m)>2m$.  We denote the \emph{abundancy index} $I$ of the positive integer $w$ as $I(w) = \sigma(w)/w$.  We also denote the deficiency $D$ of the positive integer $x$ as $D(x) = 2x-\sigma(x)$ \cite{OEIS-A033879}.  (In this case, if $D(x)>0$ we say that $x$ is deficient by $D(x)$, since the last equation can be rewritten as $\sigma(x)=2x-D(x)$.  Similarly, if $D(x)<0$ we say that $x$ is abundant by $D(x)$.  Of course, if $D(x)=0$ then $x$ is perfect.)  Lastly, we will call the ratio $D(x)/x$ as the \emph{deficiency index} of $x$, and will denote it by $d(x)=D(x)/x$.  Notice that we have the equation
$$2 - I(x) = 2 - \frac{\sigma(x)}{x} = \frac{2x-\sigma(x)}{x} = \frac{D(x)}{x} = d(x).$$

In his undergraduate honors thesis \cite{Ludwick}, Ludwick analyzed the properties of the ratio $I(n)=\sigma(n)/n$.

\section{On a Criterion for Deficient Numbers in Terms of the Abundancy and Deficiency Indices}
In the preprint \cite{Dris1}, Dris proves that $n$ is deficient by $D(n)>1$ if and only if the following bounds hold:
\begin{theorem}\label{Theorem1}
$\sigma(n)=2n-D(n)$ and $D(n)>1$ if and only if
$$\frac{2n}{n+D(n)}<I(n)<\frac{2n+D(n)}{n+D(n)}.$$
\end{theorem}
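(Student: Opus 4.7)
My plan is to rewrite both inequalities purely in terms of $n$ and $D:=D(n)$ by substituting $I(n)=(2n-D)/n$, the identity noted in the introduction, and then clearing the common denominator $n(n+D)$. Provided $n+D>0$ (the only real sign subtlety to watch), the rational inequalities should collapse to easy polynomial statements in $n$ and $D$. I would prove the two implications separately, since although the algebra is essentially reversible, the converse direction has a delicate integrality feature.

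Concretely, the upper bound $(2n-D)/n<(2n+D)/(n+D)$ cross-multiplies to $(2n-D)(n+D)<n(2n+D)$, which expands and simplifies to $-D^2<0$, i.e., $D\neq 0$. The lower bound $2n/(n+D)<(2n-D)/n$ cross-multiplies to $2n^2<(2n-D)(n+D)$, which simplifies to $D(n-D)>0$. Thus, modulo the positivity of $n+D$, the double bound is equivalent to the conjunction of the two conditions $D\neq 0$ and $D(n-D)>0$; under $D>0$ the second is just $D<n$.

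For the forward direction, $D>1$ gives $n+D>0$ and $D\neq 0$ immediately, while $D<n$ follows from $\sigma(n)\geq n+1$ (which holds because $D>1$ forces $n\geq 3$), so that $D=2n-\sigma(n)\leq n-1$. For the converse, I would first argue that $n+D>0$ is forced by the bounds themselves via a brief sign analysis of the upper bound: if $n+D<0$ then cross-multiplication flips the inequality and one would need $-D^2\geq 0$, a contradiction. Given $n+D>0$, the bounds yield $D\neq 0$ and $D(n-D)>0$, which together rule out $D<0$ (since that would force $D>n>0$) and deliver $0<D<n$.

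The main obstacle is the last step of the converse: the inequalities only force $D\geq 1$ (using that $D$ is an integer), and upgrading to the strict inequality $D>1$ requires ruling out the almost-perfect case $D=1$. I would verify this case by substituting $D=1$ directly into the bounds $2n/(n+1)<I(n)<(2n+1)/(n+1)$ to check whether strictness is automatic, and if not, I would either adjust the statement slightly or isolate almost-perfect $n$ with an auxiliary argument. This is the step I expect to absorb most of the care in the write-up.
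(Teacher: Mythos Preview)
Your algebraic reduction is sound: once $n+D>0$ is secured, clearing denominators shows the double inequality is equivalent to $D\neq 0$ together with $D(n-D)>0$, hence to $0<D(n)<n$. The forward implication then goes through exactly as you outline, using $\sigma(n)\ge n+1$ for $n\ge 2$ to get $D(n)\le n-1$.

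The obstacle you flag in the converse is not a technicality to be tidied up but a genuine defect in the stated biconditional. Carry out the check you propose: for $D(n)=1$ and $n\ge 2$ the lower bound $2n/(n+1)<(2n-1)/n$ reduces to $n>1$, and the upper bound $(2n-1)/n<(2n+1)/(n+1)$ to $-1<0$, so every almost perfect $n\ge 2$ (e.g.\ $n=2$, or any power of $2$) satisfies the strict double inequality while $D(n)=1\not>1$. Thus the ``only if'' direction of Theorem~\ref{Theorem1}, as written, fails; what the inequalities actually characterize is $0<D(n)<n$, i.e.\ ``$n\ge 2$ is deficient,'' not ``$D(n)>1$.'' Note that the paper itself never proves Theorem~\ref{Theorem1}; it is quoted from the preprint~\cite{Dris1}, and the only in-paper argument is for the companion Theorem~\ref{Theorem2}, which handles $D=1$ and, tellingly, relaxes the left-hand inequality to $\le$ precisely to absorb the boundary case $M=1$. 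Your method is in the same spirit as that proof (rewrite the bounds, cancel, reduce to integer constraints on $D$); you have simply pushed it far enough to see that the equivalence cannot hold as stated. The clean fix is to replace ``$D(n)>1$'' by ``$1\le D(n)\le n-1$,'' after which both directions follow immediately from your computations.
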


We will prove the following version of Theorem \ref{Theorem1} here:
\begin{theorem}\label{Theorem2}
$\sigma(M)=2M-1$ if and only if
$$\frac{2M}{M+1} \leq I(M) < \frac{2M+1}{M+1}.$$
\end{theorem}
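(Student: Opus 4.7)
The plan is to prove each direction by a short algebraic manipulation, viewing this statement as the $D(M)=1$ boundary case that Theorem~\ref{Theorem1} explicitly excludes. The guiding idea is that the given two-sided bound on $I(M)$ should pin down the integer $D(M) = 2M - \sigma(M)$ to the single value $1$.

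For the forward direction, I would assume $\sigma(M) = 2M - 1$, so that $I(M) = (2M-1)/M$, and verify the two required inequalities by clearing denominators. The lower bound $\tfrac{2M}{M+1} \leq \tfrac{2M-1}{M}$ reduces to $M \geq 1$, and the upper bound $\tfrac{2M-1}{M} < \tfrac{2M+1}{M+1}$ reduces to the trivial inequality $-1 < 0$.

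For the reverse direction, I would multiply the hypothesised bounds on $I(M) = \sigma(M)/M$ by $M$ and rewrite the resulting fractions as $2M - \tfrac{2M}{M+1}$ and $2M - \tfrac{M}{M+1}$. Subtracting from $2M$ and reversing inequalities then gives
$$\frac{M}{M+1} \;<\; D(M) \;\leq\; \frac{2M}{M+1}.$$
Since $D(M)$ is an integer, and for every positive integer $M$ one has $\tfrac{M}{M+1} \geq \tfrac12$ while $\tfrac{2M}{M+1} < 2$, the only integer lying in this interval is $1$. Hence $D(M) = 1$, which is exactly $\sigma(M) = 2M-1$.

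I do not anticipate a genuine obstacle: the essential observation is that the non-strict lower bound $\tfrac{2M}{M+1} \leq I(M)$, which is precisely the feature that distinguishes this theorem from Theorem~\ref{Theorem1}, is achieved exactly when $D(M) = 1$, and the interval for $D(M)$ that it cuts out has width $\tfrac{M}{M+1} < 1$ and sits strictly inside $(\tfrac12,\,2)$, forcing the unique integer value $1$.
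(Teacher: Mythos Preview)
Your proposal is correct and follows essentially the same route as the paper: both directions reduce to the chain $\tfrac{M}{M+1} < D(M) \leq \tfrac{2M}{M+1}$ and then use integrality of $D(M)$ to force $D(M)=1$. The only cosmetic differences are that the paper rewrites the bounds as $2 - \tfrac{2}{M+1}$ and $2 - \tfrac{1}{M+1}$ before comparing (whereas you clear denominators directly), and the paper finishes the reverse direction by a one-line contradiction from $D(M)\geq 2$ rather than your direct interval argument.
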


\begin{proof}
Rewriting the bounds, we obtain
$$\frac{2M}{M+1}=\frac{2(M+1)}{M+1}-\frac{2}{M+1}=2-\frac{2}{M+1}$$
and
$$\frac{2M+1}{M+1}=\frac{2(M+1)}{M+1}-\frac{1}{M+1}=2-\frac{1}{M+1}.$$
Now, $\sigma(M)=2M-1$ if and only if $I(M)=\sigma(M)/M=2-(1/M)$.  We want to show that
$$2-\frac{2}{M+1} \leq I(M)=2-\frac{1}{M} < 2-\frac{1}{M+1}.$$
Cancelling $2$ and rearranging, we get
$$\frac{1}{M+1} < \frac{1}{M} \leq \frac{2}{M+1},$$
which is trivially true as
$$M < M+1 \leq 2M$$
holds, where the inequality on the right follows from $M \geq 1$.  This proves one direction of the theorem.
Now, suppose that
$$2-\frac{2}{M+1} \leq I(M) < 2-\frac{1}{M+1}.$$
This implies that
$$\frac{1}{M+1} < 2 - I(M) \leq \frac{2}{M+1}$$
from which we obtain
$$0 < \frac{M}{M+1} < D(M) \leq \frac{2M}{M+1}.$$
We claim that $D(M)=1$.  Suppose to the contrary that $D(M) \geq 2$.  Then we have
$$2 \leq D(M) \leq \frac{2M}{M+1}$$
resulting in the contradiction $2(M+1)=2M+2 \leq 2M$.  Hence, $D(M)=1$, and we are done.
\end{proof}

In particular, the criterion in Theorem \ref{Theorem1} can be rewritten in terms of the deficiency index, as follows:
$\sigma(n)=2n-D(n)$ and $D(n)>1$ if and only if
$$\frac{2}{1+d(n)}<I(n)<\frac{2+d(n)}{1+d(n)}.$$

As an application of the criterion in Theorem \ref{Theorem1}, we can prove that primes, powers of primes, and products of two distinct odd prime powers are deficient.

First, we dispose of two technical lemmas.

\begin{lemma}\label{FirstLemma}
If $x \mid y$, then $d(y) \leq d(x)$.
\end{lemma}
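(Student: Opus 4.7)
The plan is to reduce the claim about the deficiency index to the corresponding (well-known) monotonicity property of the abundancy index, via the identity $d(n) = 2 - I(n)$ noted in the introduction. Since the map $t \mapsto 2 - t$ reverses inequalities, the conclusion $d(y) \leq d(x)$ is equivalent to $I(x) \leq I(y)$, so it suffices to show that the abundancy index is nondecreasing along the divisibility order.

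To establish $I(x) \leq I(y)$ when $x \mid y$, I would write $y = mx$ for some positive integer $m$ and exhibit an injection from the divisors of $x$ into the divisors of $y$ given by $d \mapsto md$. Since $d \mid x$ implies $md \mid mx = y$, the image consists of genuine divisors of $y$, and the map is clearly injective. Hence
$$\sigma(y) = \sum_{e \mid y} e \;\geq\; \sum_{d \mid x} md \;=\; m \sum_{d \mid x} d \;=\; m\,\sigma(x).$$
Dividing by $y = mx$ gives $I(y) = \sigma(y)/y \geq m\sigma(x)/(mx) = \sigma(x)/x = I(x)$, which, as noted, is equivalent to $d(y) \leq d(x)$.

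There is no serious obstacle here; the only minor care required is to note that the inequality $\sigma(y) \geq m\sigma(x)$ may be strict (when $x$ is a proper divisor of $y$, new divisors of $y$ typically appear that are not multiples of $m$), which is why one only gets $\leq$ rather than equality in the conclusion. Equality $d(y) = d(x)$ would force every divisor of $y$ to be of the form $md$ for some $d \mid x$, which happens precisely when $m = 1$, i.e., $x = y$; I would not belabor this point unless it is needed later, since the lemma as stated only asks for the weak inequality.
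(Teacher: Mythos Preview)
Your proof is correct and follows essentially the same approach as the paper: reduce to the monotonicity of the abundancy index via $d(n) = 2 - I(n)$. The only difference is that the paper simply invokes $x \mid y \Rightarrow I(x) \leq I(y)$ as a known fact, whereas you supply a self-contained proof of it via the injection $d \mapsto md$; your extra paragraph on when equality holds is correct but not needed for the lemma as stated.
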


\begin{proof}
Suppose that $x \mid y$.  This implies that $I(x) \leq I(y)$, from which it follows that
$$d(y)=\frac{D(y)}{y}=2-I(y) \leq 2-I(x)=\dfrac{D(x)}{x}=d(x).$$
\end{proof}

\begin{lemma}\label{MainLemma}
If $\gcd(x,y)=1$, then $D(xy) \leq D(x)D(y)$.
\end{lemma}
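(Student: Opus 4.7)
The plan is to unfold the definition $D(n)=2n-\sigma(n)$ on both sides and exploit multiplicativity of $\sigma$ on coprime arguments.

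First, since $\gcd(x,y)=1$, we have the identity $\sigma(xy)=\sigma(x)\sigma(y)$, so
$$D(xy) = 2xy - \sigma(x)\sigma(y).$$
Meanwhile, expanding the product on the right of the claimed inequality gives
$$D(x)D(y) = \bigl(2x-\sigma(x)\bigr)\bigl(2y-\sigma(y)\bigr) = 4xy - 2x\sigma(y) - 2y\sigma(x) + \sigma(x)\sigma(y).$$

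Next, I would compute the difference $D(x)D(y)-D(xy)$ directly. A routine algebraic simplification collapses the expression to
$$D(x)D(y)-D(xy) = 2\bigl(\sigma(x)-x\bigr)\bigl(\sigma(y)-y\bigr).$$
(Equivalently, this is $2(x-\sigma(x))(y-\sigma(y))$.) Because $\sigma(n)\ge n$ for every positive integer $n$, both factors on the right are nonnegative, so the difference is nonnegative, proving $D(xy)\le D(x)D(y)$.

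There is no real obstacle here: the only content is the factorisation of $D(x)D(y)-D(xy)$, and once the expansion is carried out, nonnegativity of $\sigma(n)-n$ delivers the result immediately. It may be worth remarking in passing that equality holds exactly when $x=1$ or $y=1$, which is the only case in which $\sigma(n)-n$ vanishes, although the statement of the lemma does not require this.
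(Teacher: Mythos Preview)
Your proof is correct and follows essentially the same route as the paper: both compute $D(x)D(y)-D(xy)$, expand using $\sigma(xy)=\sigma(x)\sigma(y)$, factor to obtain $2(\sigma(x)-x)(\sigma(y)-y)$, and conclude from $\sigma(n)\ge n$. Your additional remark on the equality case is accurate but goes slightly beyond what the paper records.
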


\begin{proof}
Consider the difference
$$D(x)D(y)-D(xy)=\bigg(2x-\sigma(x)\bigg)\bigg(2y-\sigma(y)\bigg)-\bigg(2xy-\sigma(xy)\bigg).$$
This is equal to
$$D(x)D(y)-D(xy)=4xy-2x\sigma(y)-2y\sigma(x)+\sigma(x)\sigma(y)-2xy+\sigma(x)\sigma(y)$$
since $\gcd(x,y)=1$.
Collecting like terms, we obtain
$$D(x)D(y)-D(xy)=2xy-2x\sigma(y)-2y\sigma(x)+2\sigma(x)\sigma(y)=2\cdot\bigg(xy-x\sigma(y)-y\sigma(x)+\sigma(x)\sigma(y)\bigg)$$
$$=2\cdot\bigg(\sigma(y)\cdot(\sigma(x)-x)-y\cdot(\sigma(x)-x)\bigg)=2\cdot\bigg(\sigma(x)-x\bigg)\cdot\bigg(\sigma(y)-y\bigg).$$
$D(xy) \leq D(x)D(y)$ now follows from $x \leq \sigma(x)$ and $y \leq \sigma(y)$ for all $x, y \in \mathbb{N}$.
\end{proof}

We are now ready to prove our claimed result.

\begin{theorem}\label{Theorem3}
Primes, prime powers, and products of two distinct odd prime powers are deficient.
\end{theorem}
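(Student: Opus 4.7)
The plan is to handle the three classes separately, relying on the geometric-series formula for $\sigma$ on prime powers and on the identity uncovered while proving Lemma~\ref{MainLemma}. For a prime $p$ the check is immediate: $\sigma(p)=p+1$ gives $D(p)=p-1\geq 1$, so $p$ is deficient. For a prime power $p^k$, I would substitute $\sigma(p^k)=(p^{k+1}-1)/(p-1)$ into the definition of $D$ and, after a line of algebra, obtain the closed form $D(p^k) = ((p-2)p^k+1)/(p-1)$, which is positive for every prime $p$: the numerator equals $1$ when $p=2$ (recovering the familiar fact that $2^k$ is almost perfect) and is a sum of two positive terms when $p$ is odd.

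For $n=p^a q^b$ with $p,q$ distinct odd primes I would invoke the identity
$$D(xy) = D(x)D(y) - 2\bigl(\sigma(x)-x\bigr)\bigl(\sigma(y)-y\bigr)$$
derived mid-proof of Lemma~\ref{MainLemma} for coprime $x,y$. Applied to $x=p^a$, $y=q^b$, it reduces the deficiency claim to verifying the strict inequality $D(p^a)\,D(q^b) > 2\bigl(\sigma(p^a)-p^a\bigr)\bigl(\sigma(q^b)-q^b\bigr)$. Using $\sigma(p^a)-p^a = (p^a-1)/(p-1)$ together with the closed form for $D(p^a)$ above, a short manipulation would yield
$$\frac{D(p^a)}{\sigma(p^a)-p^a} = (p-2) + \frac{p-1}{p^a-1} > p-2$$
for any odd prime $p$. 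Since $p$ and $q$ are distinct odd primes I may assume $p\geq 3$ and $q\geq 5$, so the product of the two ratios strictly exceeds $(p-2)(q-2)\geq 1\cdot 3 = 3 > 2$, which is precisely the required bound.

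The main obstacle, modest though it is, lies in this last estimate: the bound $(p-2)(q-2)\geq 3$ is tight at $(p,q)=(3,5)$, so both the distinctness of the two primes and their being odd enter the argument in an essential way. Relaxing either hypothesis would break the proof at that extreme, which is consistent with the theorem's hypotheses.
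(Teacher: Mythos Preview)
Your argument is correct, but it follows a genuinely different route from the paper's. The paper proves all three cases by invoking the criterion of Theorem~\ref{Theorem1}: for each class it computes $d(n)$, forms the bounds $2/(1+d(n))$ and $(2+d(n))/(1+d(n))$, and then verifies algebraically that $I(n)$ lies strictly between them; this is deliberately laborious, as Remark~\ref{Remark1} acknowledges, because the point is to exhibit the deficiency-index criterion in action. You instead bypass Theorem~\ref{Theorem1} entirely: the prime and prime-power cases go by the explicit closed form $D(p^{k})=\bigl((p-2)p^{k}+1\bigr)/(p-1)$, and the two-prime-power case rests on the exact identity $D(xy)=D(x)D(y)-2(\sigma(x)-x)(\sigma(y)-y)$ from the proof of Lemma~\ref{MainLemma} together with the sharp ratio estimate $D(p^{a})/(\sigma(p^{a})-p^{a})>p-2$. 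Your approach is shorter and more elementary, and it yields reusable closed forms; the paper's approach, on the other hand, is the one that showcases the abundancy/deficiency-index framework that is the paper's theme and that may generalize more uniformly to further products.
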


\begin{proof}
We begin with the case of primes $q$.
$$d(q)=\frac{D(q)}{q}=\frac{2q-\sigma(q)}{q}=\frac{2q-(q+1)}{q}=\frac{q-1}{q}=1-\frac{1}{q}.$$
We compute
$$1+d(q)=2-\frac{1}{q}$$
$$2+d(q)=3-\frac{1}{q}.$$
Now we test whether the inequalities
$$\frac{2}{2-\frac{1}{q}}<I(q)=1+\frac{1}{q}<\frac{3-\frac{1}{q}}{2-\frac{1}{q}}$$
hold.
These inequalities are equivalent to
$$2<\bigg(1+\frac{1}{q}\bigg)\cdot\bigg(2-\frac{1}{q}\bigg)<3-\frac{1}{q}$$
which in turn are equivalent to
$$2<2+\frac{1}{q}-\bigg(\frac{1}{q}\bigg)^2<3-\frac{1}{q}$$
$$\bigg(0<\frac{1}{q}-\bigg(\frac{1}{q}\bigg)^2 = \frac{q-1}{q^2}\bigg)  \land \bigg(0<1-2\bigg(\frac{1}{q}\bigg)+\bigg(\frac{1}{q}\bigg)^2 = \bigg(\frac{q-1}{q}\bigg)^2\bigg).$$
Both inequalities are now readily seen to hold since $q$ prime implies that $q \geq 2 > 1$.  We therefore conclude, by Theorem \ref{Theorem1}, that primes are deficient.

We now consider the case of prime powers. Let $p$ be a prime and let $k$ be a positive integer.
$$d(p^k)=\frac{D(p^k)}{p^k}=\frac{2p^k-\sigma(p^k)}{p^k}=\frac{2p^k-(p^k+\sigma(p^{k-1}))}{p^k}=\frac{p^k-\sigma(p^{k-1})}{p^k}=1-\frac{\sigma(p^{k-1})}{p^k}.$$
Notice that the inequality
$$\sigma(p^{k-1})=\frac{p^k - 1}{p-1} < p^k$$
holds.
We compute
$$1+d(p^k)=2-\frac{\sigma(p^{k-1})}{p^k}$$
$$2+d(p^k)=3-\frac{\sigma(p^{k-1})}{p^k}.$$
Now we test whether the inequalities
$$\frac{2}{2-\frac{\sigma(p^{k-1})}{p^k}}<I(p^k)=\frac{\sigma(p^k)}{p^k}=\frac{p^k + \sigma(p^{k-1})}{p^k}=1+\frac{\sigma(p^{k-1})}{p^k}<\frac{3-\frac{\sigma(p^{k-1})}{p^k}}{2-\frac{\sigma(p^{k-1})}{p^k}}$$
hold.
These inequalities are equivalent to
$$2<2+\frac{\sigma(p^{k-1})}{p^k}-\bigg(\frac{\sigma(p^{k-1})}{p^k}\bigg)^2<3-\frac{\sigma(p^{k-1})}{p^k}$$
which in turn are equivalent to
$$\bigg(0<\frac{\sigma(p^{k-1})}{p^k}\bigg(1-\frac{\sigma(p^{k-1})}{p^k}\bigg)\bigg) \land \bigg(0<1-2\frac{\sigma(p^{k-1})}{p^k}+\bigg(\frac{\sigma(p^{k-1})}{p^k}\bigg)^2=\bigg(1 - \frac{\sigma(p^{k-1})}{p^k}\bigg)^2\bigg).$$
Both inequalities are now readily seen to hold since $\sigma(p^{k-1})<p^k$ implies that $1-\frac{\sigma(p^{k-1})}{p^k}>0$.  We therefore conclude, by Theorem \ref{Theorem1}, that prime powers are deficient.

Lastly, we turn our attention to products of two distinct odd prime powers.  Let $p$ and $q \neq p$ be primes, and let $r$ and $s$ be positive integers.
$$d({p^r}{q^s})=\frac{D({p^r}{q^s})}{{p^r}{q^s}}=\frac{2{p^r}{q^s}-\sigma({p^r}{q^s})}{{p^r}{q^s}}=2-I(p^r)I(q^s)$$
Notice that
$$1 < \bigg(1+\frac{1}{p}\bigg)\cdot\bigg(1+\frac{1}{q}\bigg) \leq I(p^r)I(q^s) < \bigg(1+\frac{1}{p-1}\bigg)\cdot\bigg(1+\frac{1}{q-1}\bigg) \leq \frac{3}{2}\cdot\frac{5}{4} = \frac{15}{8} < 2.$$
We compute
$$1+d({p^r}{q^s})=3-I(p^r)I(q^s)$$
$$2+d({p^r}{q^s})=4-I(p^r)I(q^s).$$
Now we test whether the inequalities
$$\frac{2}{3-I(p^r)I(q^s)}<I(p^r)I(q^s)<\frac{4-I(p^r)I(q^s)}{3-I(p^r)I(q^s)}$$
hold.
These inequalities are equivalent to
$$2<3I(p^r)I(q^s)-\bigg(I(p^r)I(q^s)\bigg)^2<4-I(p^r)I(q^s)$$
which in turn are equivalent to
$$\bigg(\bigg(I(p^r)I(q^s) - 1\bigg)\bigg(I(p^r)I(q^s) - 2\bigg) = \bigg(I(p^r)I(q^s)\bigg)^2 - 3I(p^r)I(q^s) + 2 < 0\bigg)$$
and
$$\bigg(\bigg(2-I(p^r)I(q^s)\bigg)^2= 4-4I(p^r)I(q^s) + \bigg(I(p^r)I(q^s)\bigg)^2 > 0\bigg),$$
which both imply that
$$I({p^r}{q^s})=I(p^r)I(q^s) < 2$$
since $I(p^r)I(q^s) > 1$.  Since $I(p^r)I(q^s)<2$ is known to be true, we therefore conclude by Theorem \ref{Theorem1} that products of two distinct odd prime powers are deficient.
\end{proof}

\begin{remark}\label{Remark1}
Why did we bother with a laborious proof for Theorem \ref{Theorem3}?  The method presented may lend itself well to further generalizations.
\end{remark}

\section{Friendly and Solitary Numbers in the Language of the Deficiency Index}
If there exists $y \neq x$ such that $I(x)=I(y)$, then
$$d(x)=2-I(x)=2-I(y)=d(y),$$
and $y$ is said to be a \emph{friend} of $x$.  (We shall likewise refer to $x$ and $y$ as \emph{friendly numbers}.)  Otherwise, if $I(x') \neq I(z)$ for all $z \in \mathbb{N}$, then
$$d(x')=2-I(x') \neq 2-I(z)=d(z),$$
for all $z \in \mathbb{N}$.  Such a number $x'$ is said to be \emph{solitary}.

We now show how to prove results for friendly and solitary numbers in the language of the deficiency index, similar to those that are done in terms of the abundancy index.

\begin{lemma}\label{Lemma1}
If $\gcd(n,D(n))=1$, then $n$ is solitary.
\end{lemma}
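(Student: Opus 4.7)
The plan is to reduce this to the classical observation that $\gcd(n,\sigma(n))=1$ implies solitariness, and then give the standard short argument for that. First I would observe that since $D(n)=2n-\sigma(n)$, any common divisor of $n$ and $D(n)$ is also a common divisor of $n$ and $\sigma(n)$, and conversely; hence
\[
\gcd(n,D(n))=\gcd(n,2n-\sigma(n))=\gcd(n,\sigma(n)).
\]
So the hypothesis can be rephrased as $\gcd(n,\sigma(n))=1$, which is the form in which the argument flows most naturally.

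Next I would argue by contradiction. Suppose some $m\neq n$ satisfies $I(m)=I(n)$, equivalently $d(m)=d(n)$. Clearing denominators gives $n\sigma(m)=m\sigma(n)$, so $n\mid m\sigma(n)$. Since $\gcd(n,\sigma(n))=1$, this forces $n\mid m$. Write $m=kn$ with $k\in\mathbb{N}$; the case $k=1$ would contradict $m\neq n$, so I may assume $k\geq 2$.

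The final step is to show that $k\geq 2$ already forces $I(kn)>I(n)$, giving the needed contradiction. For each divisor $d$ of $n$, the integer $kd$ is a divisor of $m=kn$, and the $kd$'s are pairwise distinct. Hence
\[
\sigma(m)\;\geq\;\sum_{d\mid n}kd\;=\;k\sigma(n).
\]
Moreover $1$ is a divisor of $m$ but is not of the form $kd$ when $k\geq 2$, so the inequality is strict: $\sigma(m)\geq 1+k\sigma(n)>k\sigma(n)$. Dividing by $m=kn$ gives $I(m)>\sigma(n)/n=I(n)$, the desired contradiction.

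The only real subtlety is the translation $\gcd(n,D(n))=\gcd(n,\sigma(n))$ at the start; after that, the proof is the textbook argument and requires no estimate more delicate than the observation that $1$ is always a divisor not captured by $\{kd:d\mid n\}$ once $k\geq 2$. I do not anticipate a serious obstacle.
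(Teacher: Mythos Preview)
Your proof is correct and follows the same reduction as the paper: both begin with the observation that $\gcd(n,D(n))=\gcd(n,2n-\sigma(n))=\gcd(n,\sigma(n))$. The paper then simply invokes Greening's Theorem \cite{Greening} as a black box for the implication that $\gcd(n,\sigma(n))=1$ forces $n$ to be solitary, whereas you supply the standard proof of that result in full (deducing $n\mid m$ from $n\sigma(m)=m\sigma(n)$ and coprimality, then showing $I(kn)>I(n)$ strictly for $k\geq 2$ via the extra divisor $1$). So this is the same route, just made self-contained rather than relying on a citation; the gain is that your version stands on its own, at the cost of a few extra lines.
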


In particular, if the fraction $D(n)/n$ is in lowest terms, then $n$ is solitary by Lemma \ref{Lemma1}.

\begin{proof}
By Greening's Theorem \cite{Greening}, it suffices to show that
$$\gcd(n,D(n))=\gcd(n,\sigma(n)).$$
But
$$\gcd(n,D(n))=\gcd(n,2n-\sigma(n))=\gcd(n,\sigma(n)),$$
where we have used the fact that $\gcd(a,b)=\gcd(a,ax+by)$ for $x, y \in \mathbb{Z}$.
\end{proof}

\begin{corollary}\label{Corollary1}
Primes and powers of primes are solitary.
\end{corollary}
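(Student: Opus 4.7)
The plan is to apply Lemma \ref{Lemma1} directly: it suffices to show that $\gcd(n, D(n)) = 1$ whenever $n$ is a prime or a prime power, since solitariness follows immediately.

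For a prime $q$, I would compute $D(q) = 2q - \sigma(q) = 2q - (q+1) = q - 1$, and note that $\gcd(q, q-1) = 1$ is trivial (consecutive integers are coprime). This disposes of the prime case in one line.

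For a prime power $p^k$ with $k \geq 1$, I would use the computation already carried out in the proof of Theorem \ref{Theorem3}, namely $D(p^k) = p^k - \sigma(p^{k-1})$. Then
$$\gcd(p^k, D(p^k)) = \gcd\bigl(p^k,\; p^k - \sigma(p^{k-1})\bigr) = \gcd\bigl(p^k,\; \sigma(p^{k-1})\bigr),$$
using the same identity $\gcd(a,b) = \gcd(a, ax+by)$ employed in Lemma \ref{Lemma1}. Since $\sigma(p^{k-1}) = 1 + p + p^2 + \cdots + p^{k-1} \equiv 1 \pmod{p}$, the prime $p$ does not divide $\sigma(p^{k-1})$, and therefore $\gcd(p^k, \sigma(p^{k-1})) = 1$. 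By Lemma \ref{Lemma1}, $p^k$ is solitary.

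There is essentially no obstacle here; the only mild subtlety is remembering that the prime case is subsumed by the prime power case (take $k=1$), so in principle the two could be handled by a single computation, but presenting them separately mirrors the phrasing of the corollary and is cleaner.
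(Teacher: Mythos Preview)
Your proposal is correct and follows essentially the same approach as the paper: both handle primes by the one-line computation $D(q)=q-1$, and both reduce the prime-power case to $\gcd(p^k,\sigma(p^{k-1}))=1$ via the gcd identity. The only difference is cosmetic: the paper phrases the prime-power step as a proof by contradiction and leaves the reason for $\gcd(p^k,\sigma(p^{k-1}))=1$ implicit, whereas you argue directly and supply the justification $\sigma(p^{k-1})\equiv 1\pmod p$, which is if anything an improvement in clarity.
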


\begin{proof}
Let $q$ be a prime.  Then
$$D(q)=2q-\sigma(q)=2q-(q+1)=q-1,$$
which implies that $\gcd(q,D(q))=1$.  Hence, primes are solitary by Lemma \ref{Lemma1}.

Let $p$ be a prime, and let $k$ be a positive integer.  Then
$$D(q^k)=2q^k - \sigma(q^k)=2q^k - (q^k + \sigma(q^{k-1})) = q^k - \sigma(q^{k-1}).$$
We want to show that $\gcd(q^k,D(q^k))=1$.  Suppose to the contrary that
$$\gcd(q^k, D(q^k))=m>1.$$
Then $m \mid q^k$ and $m \mid D(q^k) = q^k - \sigma(q^{k-1})$.  It follows that $m \mid \sigma(q^{k-1})$, whence we have
$$\gcd(q^k, \sigma(q^{k-1})) \geq m > 1.$$
This is a contradiction.  We therefore conclude that $\gcd(q^k,D(q^k))=1$, so that prime powers are solitary.
\end{proof}

\begin{remark}\label{Remark2}
In particular, by Lemma \ref{Lemma1} and Corollary \ref{Corollary1}, there are infinitely many numbers $n$ satisfying $\gcd(n,D(n))=1$.
\end{remark}

\section{On Odd Deficient-Perfect Numbers}
A number $x$ is said to be \emph{deficient-perfect} if the divisibility condition $D(x) \mid x$ holds \cite{OEIS-A271816}.

$y = 9018009 = {3^2}\cdot{7^2}\cdot{{11}^2}\cdot{{13}^2}$ is deficient-perfect, since
$$D(y) = D(9018009) = 819 = {3^2}\cdot{7}\cdot{13}.$$

The quotient
$$\frac{y}{D(y)} = {7}\cdot{{11}^2}\cdot{13} = 11011$$
happens to be a palindrome!  By our formula relating the deficiency and abundancy indices, we have
$$\frac{D(y)}{y} = \frac{1}{11011}$$
and
$$I(y) = 2 - \frac{D(y)}{y} = \frac{22021}{11011}$$
which is perilously close to $2$ as some have described.

(\textbf{This portion is currently a work in progress.})

\section{Acknowledgments}
The author thanks the anonymous referee(s) whose valuable feedback improved the overall presentation and style of this manuscript.

\bibliographystyle{amsplain}

\begin{thebibliography}{5}
\bibitem{Dris1}
J.~A.~B.~Dris, A Criterion for Deficient Numbers Using the Abundancy Index and Deficiency Functions, preprint (2016), \url{https://arxiv.org/pdf/1308.6767.pdf}.
\bibitem{Greening}
C.~W.~Anderson, D.~Hickerson, and M.~G.~Greening, Advanced Problem 6020: Friendly Integers, American Mathematical Monthly, vol 84, no 1, (1977) pp 65--66.
\bibitem{Ludwick}
K.~E.~Ludwick, Analysis of the Ratio $\sigma(n)/n$, Undergraduate honors thesis, Penn State University (1994).
\bibitem{OEIS-A033879}
N.~J.~A.~Sloane, OEIS sequence A033879 - Deficiency of $n$, or $2n - \sigma(n)$, \url{http://oeis.org/A033879}.
\bibitem{OEIS-A271816}
C.~F.~E.~Adajar, OEIS sequence A271816 - Deficient-perfect numbers: Deficient numbers $n$ such that $n/(2n-\sigma(n))$ is an integer, \url{http://oeis.org/A271816}.
\end{thebibliography}

\end{document}